\def\NZQ{\mathbb}               % the font for N,Z,Q,R,C
\def\ZZ{{\NZQ Z}}
\def\RR{{\NZQ R}}
\def\frk{\mathfrak}               % font for "Fraktur"
\def\Phi{{\frk N}}
\def\eb{{\bold e}}
\def\opn#1#2{\def#1{\operatorname{#2}}} % to make operators
\opn\chara{char} \opn\length{\ell} \opn\pd{pd} \opn\rk{rk}
\opn\projdim{proj\,dim} \opn\injdim{inj\,dim} \opn\rank{rank}
\opn\depth{depth} \opn\grade{grade} \opn\height{height}
\opn\embdim{emb\,dim} \opn\codim{codim}
\opn\Tr{Tr} \opn\bigrank{big\,rank}
\opn\superheight{superheight}\opn\lcm{lcm}
\opn\trdeg{tr\,deg}%\emph{
\opn\reg{reg} \opn\lreg{lreg} \opn\ini{in} \opn\lpd{lpd}
\opn\size{size}\opn{\mult}{mult}
\opn\div{div} \opn\Div{Div} \opn\cl{cl} \opn\Cl{Cl}
\opn\Spec{Spec} \opn\Supp{Supp} \opn\supp{supp} \opn\Sing{Sing}
\opn\Ass{Ass} \opn\Min{Min}
\opn\Ann{Ann} \opn\Rad{Rad} \opn\Soc{Soc}
\opn\Syz{Syz} \opn\Im{Im} \opn\Ker{Ker} \opn\Coker{Coker}
\opn\Am{Am} \opn\Hom{Hom} \opn\Tor{Tor} \opn\Ext{Ext}
\opn\End{End} \opn\Aut{Aut} \opn\id{id} \opn\ini{in}
\opn\nat{nat}
\opn\pff{pf}%   \pf exists already
\opn\Pf{Pf} \opn\GL{GL} \opn\SL{SL} \opn\mod{mod} \opn\ord{ord}
\opn\Gin{Gin}
\opn\Hilb{Hilb}\opn\adeg{adeg}\opn\std{std}\opn\ip{infpt}
\opn\Pol{Pol}
\opn\sat{sat}
\opn\Var{Var}
\opn\Gen{Gen}
\opn\aff{aff} \opn\con{conv} \opn\relint{relint} \opn\st{st}
\opn\lk{lk} \opn\cn{cn} \opn\core{core} \opn\vol{vol}
\opn\link{link} \opn\star{star}
\opn\gr{gr}
\def\Cc{{\mathcal C}}
\def\Oc{{\mathcal O}}
\def\Pc{{\mathcal P}}
\def\Qc{{\mathcal Q}}
\def\pot#1#2{#1[\kern-0.28ex[#2]\kern-0.28ex]}
\opn\dirlim{\underrightarrow{\lim}}
\opn\inivlim{\underleftarrow{\lim}}
\let\to=\rightarrow
\def\Implies{\ifmmode\Longrightarrow \else
        \unskip${}\Longrightarrow{}$\ignorespaces\fi}
\def\implies{\ifmmode\Rightarrow \else
        \unskip${}\Rightarrow{}$\ignorespaces\fi}
\def\iff{\ifmmode\Longleftrightarrow \else
        \unskip${}\Longleftrightarrow{}$\ignorespaces\fi}
\newtheorem{Theorem}{Theorem}[section]
\newtheorem{Lemma}[Theorem]{Lemma}
\newtheorem{Corollary}[Theorem]{Corollary}
\newtheorem{Example}[Theorem]{Example}
\let\epsilon\varepsilon
\let\phi=\varphi
\let\kappa=\varkappa
\opn\dis{dis}
\def\pnt{{\raise0.5mm\hbox{\large\bf.}}}
\opn\Lex{Lex}
\begin{document}
\title{The numbers of edges of the order polytope and the chain polytope
of a finite partially ordered set}
\author{% Ginji Hamano, 
Takayuki Hibi, Nan Li, Yoshimi Sahara and Akihiro Shikama}
\thanks{}
\subjclass{}
% \address{Ginji Hamano,
% Department of Pure and Applied Mathematics,
% Graduate School of Information Science and Technology,
% Osaka University,
% Toyonaka, Osaka 560-0043, Japan}
% \email{g-hamano@cr.math.sci.osaka-u.ac.jp}
\address{Takayuki Hibi,
Department of Pure and Applied Mathematics,
Graduate School of Information Science and Technology,
Osaka University,
Toyonaka, Osaka 560-0043, Japan}
\email{hibi@math.sci.osaka-u.ac.jp}
\address{Nan Li,
Department of Mathematics,
Massachusetts Institute of Technology,
Cambridge, MA 02139, USA}
\email{nan@math.mit.edu}
\address{Yoshimi Sahara,
Department of Pure and Applied Mathematics,
Graduate School of Information Science and Technology,
Osaka University,
Toyonaka, Osaka 560-0043, Japan}
\email{y-sahara@cr.math.sci.osaka-u.ac.jp}
\address{Akihiro Shikama,
Department of Pure and Applied Mathematics,
Graduate School of Information Science and Technology,
Osaka University,
Toyonaka, Osaka 560-0043, Japan}
\email{a-shikama@cr.math.sci.osaka-u.ac.jp}
\thanks{}
\begin{abstract}
Let $P$ be an arbitrary finite partially ordered set.
It will be proved that the number of edges of the order polytope 
$\Oc(P)$ is equal to that of the chain polytope $\Cc(P)$.
Furthermore, it will be shown that
the degree sequence of the finite simple graph 
which is the $1$-skeleton of $\Oc(P)$ is equal to that of $\Cc(P)$ if and only if
$\Oc(P)$ and $\Cc(P)$ are unimodularly equivalent.  
\end{abstract}
\subjclass{}
\thanks{
{\bf 2010 Mathematics Subject Classification:}
Primary 52B05; Secondary 06A07. \\
\hspace{5.3mm}{\bf Key words and phrases:}
chain polytope, order polytope, partially ordered set.}
\maketitle
\section*{Introduction}
In \cite{Stanley} the combinatorial structure of the order polytope $\Oc(P)$ 
and the chain polytope $\Cc(P)$ of a finite poset (partially ordered set) $P$ 
is studied in detail.
Furthermore, the problem when $\Oc(P)$ and $\Cc(P)$ are unimodularly equivalent 
is solved in \cite{TN}.  
In this paper it is proved that, for an arbitrary
finite poset $P$, the number of edges of the order polytope 
$\Oc(P)$ is equal to that of the chain polytope $\Cc(P)$.
Furthermore, it is shown that
the degree sequence of the finite simple graph 
which is the $1$-skeleton of $\Oc(P)$ is equal to that of $\Cc(P)$ if and only if
$\Oc(P)$ and $\Cc(P)$ are unimodularly equivalent.

\section{Edges of order polytopes and chain polytopes}
Let $P = \{x_{1}, \ldots, x_{d}\}$ be a finite poset.
Given a subset $W \subset P$, we introduce 
$\rho(W) \in \RR^{d}$ by setting $\rho(W) = \sum_{i \in W}\eb_{i}$,
where $\eb_{1}, \eb_{2} \ldots, \eb_{d}$ are the canonical unit coordinate vectors 
of $\RR^{d}$.
In particular $\rho(\emptyset)$ is the origin of $\RR^{d}$.
% A totally ordered subset of $P$ is called a chain of $P$.
A {\em poset ideal} of $P$ is a subset $I$ of $P$ such that,
for all $x_{i}$ and $x_{j}$ with
$x_{i} \in I$ and $x_{j} \leq x_{i}$, one has $x_{j} \in I$.
An {\em antichain} of $P$ is a subset
$A$ of $P$ such that $x_{i}$ and $x_{j}$ belonging to $A$ with $i \neq j$ 
are incomparable.  The empty set $\emptyset$ is a poset ideal
as well as an antichain of $P$.
We say that $x_{j}$ {\em covers} $x_{i}$ if $x_{i} < x_{j}$ and
$x_{i} < x_{k} < x_{j}$ for no $x_{k} \in P$.
A chain $x_{j_{1}} < x_{j_{2}} < \cdots < x_{j_{\ell}}$ of $P$ is called
{\em saturated} if $x_{j_{q}}$ covers $x_{j_{q-1}}$ for $1 < q \leq \ell$.

The {\em order polytope} of $P$ is the convex polytope $\Oc(P) \subset \RR^{d}$
which consists of those $(a_{1}, \ldots, a_{d}) \in \RR^{d}$ such that
$0 \leq a_{i} \leq 1$ for every $1 \leq i \leq d$ together with
\[
a_{i} \geq a_{j}
\]
if $x_{i} \leq x_{j}$ in $P$.

The {\em chain polytope} of $P$ is the convex polytope $\Cc(P) \subset \RR^{d}$
which consists of those $(a_{1}, \ldots, a_{d}) \in \RR^{d}$ such that
$a_{i} \geq 0$ for every $1 \leq i \leq d$ together with
\[
a_{i_{1}} + a_{i_{2}} + \cdots + a_{i_{k}} \leq 1
\]
for every maximal chain $x_{i_{1}} < x_{i_{2}} < \cdots < x_{i_{k}}$ of $P$.

One has $\dim \Oc(P) = \dim \Cc(P) = d$.
The vertices of $\Oc(P)$ is
those $\rho(I)$ for which $I$ is a poset ideal of $P$
(\cite[Corollary 1.3]{Stanley}) and  
the vertices of $\Cc(P)$ is
those $\rho(A)$ for which $A$ is an antichain of $P$
(\cite[Theorem 2.2]{Stanley}).
It then follows that
the number of vertices of $\Oc(P)$ is equal to that of $\Cc(P)$.
Furthermore, the volume of $\Oc(P)$ and that of $\Cc(P)$ are equal to $e(P)/d!$, 
where $e(P)$ is the number of linear extensions of $P$ (\cite[Corollary 4.2]{Stanley}).

In \cite{TNcut}
a characterization of edges of $\Oc(P)$ and those of $\Cc(P)$ is obtained.
Recall that a subposet $Q$ of a finite poset $P$ 
is said to be {\em connected} in $P$ if, for each $x$ and $y$ belonging to $Q$, 
there exists a sequence
$x = x_0, x_1, \ldots, x_s = y$ with each $x_i \in Q$
for which $x_{i-1}$ and $x_{i}$ are comparable in $P$ 
for each $1 \leq i \leq s$. 

\begin{Lemma}
\label{Boston}
Let $P$ be a finite poset.

{\rm (a)} 
Given poset ideals $I$ and $J$ with $I \neq J$, 
the segment combining $\rho(I)$ with $\rho(J)$
is an edge of $\Oc(P)$ if and only if $I \subset J$ and $J \setminus I$ 
is connected in $P$.  

{\rm (b)} 
Given antichains $A$ and $B$ with $A \neq B$, 
the segment combining $\rho(A)$ with $\rho(B)$
is an edge of $\Cc(P)$ if and only if $(A \setminus B) \cup (B \setminus A)$ 
is connected in $P$.  
\end{Lemma}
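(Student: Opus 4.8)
The plan is to rely throughout on the standard adjacency criterion: for distinct vertices $v$ and $w$ of a polytope $\mathcal{P}$, the segment $\con\{v,w\}$ is an edge of $\mathcal{P}$ if and only if every pair of points $p,q\in\mathcal{P}$ with $p+q=v+w$ lies on $\con\{v,w\}$. One direction is the defining property of faces applied to the midpoint $\tfrac12(v+w)$; for the other, the smallest face $F$ containing that midpoint necessarily contains $v$ and $w$, so if $\con\{v,w\}$ is not an edge then $\dim F\ge 2$, and perturbing the midpoint inside $F$ in a direction not parallel to $v-w$ produces a forbidden pair $p,q$. With this criterion, parts (a) and (b) each split into the necessity of the stated combinatorial condition (handled by contrapositive) and its sufficiency (handled directly); the two arguments are parallel except that the chain polytope needs one extra idea.

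For (a), necessity: if $I$ and $J$ are incomparable as sets, then $I\cap J$ and $I\cup J$ are poset ideals with $\rho(I\cap J)+\rho(I\cup J)=\rho(I)+\rho(J)$, and comparing coordinates on the nonempty sets $I\setminus J$ and $J\setminus I$ shows $\rho(I\cap J)\notin\con\{\rho(I),\rho(J)\}$, so the segment is not an edge; hence we may assume $I\subset J$. If $J\setminus I$ is disconnected, write $J\setminus I=C_1\sqcup C_2$ with no comparabilities in $P$ between $C_1$ and $C_2$; then $I\cup C_1$ and $I\cup C_2$ are poset ideals with $\rho(I\cup C_1)+\rho(I\cup C_2)=\rho(I)+\rho(J)$, and the coordinate check on $C_1$ and $C_2$ puts $\rho(I\cup C_1)$ off the segment. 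For sufficiency, suppose $I\subset J$ with $J\setminus I$ connected and take $p,q\in\Oc(P)$ with $p+q=\rho(I)+\rho(J)$. The coordinates are forced to $1$ on $I$ and to $0$ off $J$; and for $x_i<x_j$ in $J\setminus I$ the order-polytope inequalities $p_i\ge p_j$ and $q_i\ge q_j$, together with $p_i+q_i=p_j+q_j=1$, force $p_i=p_j$. Connectedness of $J\setminus I$ then makes $p$ constant there, so $p=t\,\rho(I)+(1-t)\,\rho(J)$ with $t\in[0,1]$, and likewise $q$; by the criterion the segment is an edge.

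For (b), the guiding observation is that $S:=(A\setminus B)\cup(B\setminus A)$ is the \emph{disjoint} union of the two antichains $A\setminus B$ and $B\setminus A$, so every comparable pair inside $S$ has exactly one member in each block; in other words $S$, with comparability as adjacency, is bipartite. Necessity: if $S=S_1\sqcup S_2$ is disconnected, set $A'=(A\cap B)\cup(A\cap S_1)\cup(B\cap S_2)$ and $B'=(A\cap B)\cup(B\cap S_1)\cup(A\cap S_2)$; the absence of comparabilities between $S_1$ and $S_2$ makes $A'$ and $B'$ antichains, one has $\rho(A')+\rho(B')=\rho(A)+\rho(B)$, and a coordinate check on $S_1$ and $S_2$ shows $\rho(A')\notin\con\{\rho(A),\rho(B)\}$, so the segment is not an edge. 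Sufficiency: assume $S$ connected and take $p,q\in\Cc(P)$ with $p+q=\rho(A)+\rho(B)$. The coordinates are forced to $1$ on $A\cap B$ (using $p_k\le 1$ and $q_k\le 1$, which holds because $x_k$ lies on some maximal chain of $P$) and to $0$ off $A\cup B$, while $p_k+q_k=1$ for $k\in S$. Since the chain polytope has no pairwise inequalities, for comparable $x_i,x_j\in S$ I would pick a maximal chain of $P$ through both and use $p_i+p_j\le 1$ and $q_i+q_j\le 1$; with $p_i+q_i=p_j+q_j=1$ this forces $p_i+p_j=1$. Propagating this relation along paths in the connected bipartite poset $S$ shows that $p\equiv t$ on $A\setminus B$ and $p\equiv 1-t$ on $B\setminus A$ for a single $t\in[0,1]$ (the cases $|S|\le 1$ being immediate), that is $p=t\,\rho(A)+(1-t)\,\rho(B)$, and symmetrically for $q$; so the segment is an edge.

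The step I expect to be the main obstacle is exactly this last one for the chain polytope. For the order polytope the relation $p_i=p_j$ is read off directly from the defining inequalities, but for the chain polytope one must extract the local relation $p_i+p_j=1$ from the global maximal-chain inequalities and then invoke the bipartiteness of the symmetric difference of two antichains to see that propagating these relations around cycles is consistent and lands precisely on the desired segment.
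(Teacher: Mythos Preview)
Your argument is correct. The paper, however, does not prove Lemma~\ref{Boston} at all: it simply quotes the result from \cite{TNcut} (``In \cite{TNcut} a characterization of edges of $\Oc(P)$ and those of $\Cc(P)$ is obtained''), so there is no in-paper proof to compare with. What you have supplied is a complete self-contained proof via the midpoint/vertex-adjacency criterion. A couple of remarks: in part~(b), your maximal chain through comparable $x_i,x_j\in S$ cannot meet $A\cap B$ (any such element would be comparable to one of $x_i,x_j$ inside the antichain $A$ or $B$), so the inequality $p_i+p_j\le 1$ holds without hidden $+1$ terms; and your bipartiteness observation is exactly what makes the propagation of $p_i+p_j=1$ along paths consistent. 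These are the points where a reader might hesitate, and both are in order.
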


Let, in general, $G$ be a finite simple graph, i.e., a finite graph
with no loop and with no multiple edge, on the vertex set 
$V(G) = \{ v_1, \ldots, v_n \}$.
The {\em degree} $\deg_G(v_i)$ of each $v_i \in V(G)$ is the number of edges $e$ 
of $G$ with $v_i \in e$.  Let $i_1 \cdots i_n$ denote a permutation of 
$1, \ldots, n$ for which $\deg_G(v_{i_1}) \leq \cdots \leq \deg_G(v_{i_n})$.
The {\em degree sequence} (\cite[p.~216]{Diestel}) of $G$ is the finite sequence 
$(\deg_G(v_{i_1}), \ldots, \deg_G(v_{i_n}))$.

\begin{Example}
\label{EX}
{\em
Let $X$ denote the poset 
\bigskip
$$
\xy 0;/r.2pc/: (0,0)*{\circ}="a";
 (8,10)*{\circ}="b";
 (-8,10)*{\circ}="c";
 (-8,-10)*{\circ}="d";
 (8,-10)*{\circ}="e";
  "a"; "b"**\dir{-};
   "a"; "c"**\dir{-};
    "a"; "d"**\dir{-};
     "a"; "e"**\dir{-};
\endxy
$$

\smallskip

\hspace{6.3cm}
{\rm \bf \small Figure\,\,1}

\bigskip
\medskip

\noindent
Then the degree sequence % (\cite[p.~216]{Diestel})
of the finite simple graph which is the $1$-skeleton of 
$\Oc(X)$ is 
\[
(6,6,6,6,6,6,6,6)
\]
and that of $\Cc(X)$ is
\[
(5,6,6,6,6,6,6,7).
\]
This observation guarantees that, even though the number of edges of
$\Oc(X)$ is equal to that of $\Cc(X)$, one cannot construct a bijection
$\varphi : V(\Oc(X)) \to V(\Cc(X))$, where $V(\Oc(X))$ is the set of vertices of
$\Oc(X)$ and $V(\Cc(X))$ is that of $\Cc(X)$, with the property that,
for $\alpha$ and $\beta$ belonging to $V(\Oc(X))$, 
the segment combining $\alpha$ and $\beta$ is an edge of $\Oc(X)$
if and only if the segment combining $\varphi(\alpha)$ and $\varphi(\beta)$ 
is an edge of $\Cc(X)$.
}
\end{Example}

\section{The number of edges of order polytopes and chain polytopes}
We now come to the main result of the present paper.

\begin{Theorem}
\label{main}
Let $P$ be an arbitrary finite poset.  Then the number of edges of the order polytope
$\Oc(P)$ is equal to that of the chain polytope $\Cc(P)$.
\end{Theorem}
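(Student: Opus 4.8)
The plan is to construct, for each finite poset $P$, an explicit bijection between the edge set of $\Oc(P)$ and the edge set of $\Cc(P)$, using the combinatorial descriptions in Lemma~\ref{Boston}. By that lemma, edges of $\Oc(P)$ are in bijection with pairs $(I,J)$ of poset ideals with $I \subsetneq J$ and $J \setminus I$ connected in $P$; equivalently, with pairs $(I, C)$ where $I$ is a poset ideal and $C = J \setminus I$ is a nonempty connected subset of $P$ such that $I \cup C$ is again a poset ideal (i.e., $C$ is an ``order-connected'' layer sitting on top of $I$). Edges of $\Cc(P)$ are in bijection with pairs $\{A,B\}$ of distinct antichains with $(A\setminus B)\cup(B\setminus A)$ connected in $P$. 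So the task reduces to a purely combinatorial identity about the poset $P$, and the natural strategy is induction on $d = |P|$.

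First I would set up the induction by removing a maximal element $x = x_d$ of $P$ and letting $P' = P \setminus \{x\}$. Every poset ideal of $P$ is either a poset ideal of $P'$ or of the form $I' \cup \{x\}$ where $I'$ is a poset ideal of $P'$ containing all elements below $x$; similarly every antichain of $P$ either avoids $x$ (hence is an antichain of $P'$) or contains $x$ (hence is $A' \cup \{x\}$ with $A'$ an antichain of $P'$ consisting of elements incomparable to $x$). The edges of $\Oc(P)$ thus split into three classes: (i) both ideals avoid $x$; (ii) both ideals contain $x$; (iii) exactly one ideal contains $x$. Classes (i) and (ii) are each naturally identified with edges of $\Oc(P')$ coming from ideals of $P'$ — in case (ii), $J\setminus I$ is unchanged by deleting $x$ from both — so together they contribute roughly $2 \cdot (\text{edges of }\Oc(P'))$ minus a correction, while class (iii) consists of edges where $x \in J\setminus I$ and $J\setminus I$ is connected. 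One carries out the analogous split for $\Cc(P)$: antichains avoiding $x$, antichains containing $x$, and mixed pairs. The hope is that the ``avoids $x$'' parts on both sides match by induction, the ``contains $x$'' parts match by a suitable sub-induction or direct bijection, and the mixed parts — the genuinely new edges created by adjoining $x$ — match by a direct combinatorial argument.

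The crux, and the step I expect to be the main obstacle, is counting the ``new'' edges incident to the extreme vertices. On the order-polytope side the new edges are pairs $(I,J)$ with $x\in J$, $x\notin I$, $J\setminus I$ connected in $P$, and $I,J$ both ideals; on the chain-polytope side the new edges are pairs $\{A, B\}$ with (say) $x\in B\setminus A$ and $(A\setminus B)\cup(B\setminus A)$ connected. These two families are not obviously equinumerous because ideals and antichains behave very differently under adding a top element: adding $x$ on top forces every maximal element of the old poset below it to matter for ideals but to become comparable-to-$x$ (hence excluded) for antichains. I would try to handle this by a finer bookkeeping: restrict attention to the connected component $C$ of $J\setminus I$ (resp.\ of the symmetric difference) containing $x$, and show that the number of ways to complete $C$ to a valid edge depends only on $C$ together with the induced subposet structure on $P$ near $C$, in a way that is symmetric between the order and chain pictures. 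An alternative, possibly cleaner route is to avoid the element-deletion induction entirely and instead build the bijection directly from a linear extension $x_1 < x_2 < \cdots$: given an edge $(I,J)$ of $\Oc(P)$, encode it by the ``profile'' recording, for each element, whether it lies in $I$, in $J\setminus I$, or outside $J$, and then transport this profile to an antichain pair via the classical order-ideal/antichain correspondence that already underlies the equality of vertex numbers. Reconciling the connectivity condition under this transport is exactly where the real work lies, and I would expect to need Lemma~\ref{Boston} in an essential way at that point rather than merely at the level of counting.

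If the direct bijection proves too delicate, a fallback is to prove the numerical identity by computing both edge-counts as a sum over connected subposets $C$ of $P$ of (number of poset ideals $I$ with $I\cup C$ an ideal and $I\cap C=\emptyset$) versus an analogous antichain count, and then verify termwise equality — but I anticipate the termwise statement is essentially as hard as the bijection, so the bijective approach via Lemma~\ref{Boston} and induction on $|P|$ is the one I would pursue first, with the mixed/new-edge count at a newly added maximal element as the single hard case to nail down.
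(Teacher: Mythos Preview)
What you have written is a strategy outline, not a proof: the step you yourself flag as ``the crux'' --- matching the edges of $\Oc(P)$ newly created by adjoining a maximal element $x$ against the corresponding new edges of $\Cc(P)$ --- is left entirely open, and everything else in the plan is routine bookkeeping. Your induction scheme is plausible in shape, but nothing in the proposal indicates how the hard step would actually be carried out; the ``finer bookkeeping'' and ``profile transport'' suggestions are gestures toward a construction rather than a construction. (A minor point: classes (i) and (ii) do not together contribute anything like $2$ times the edge count of $\Oc(P')$; class (ii) is only in bijection with those edges of $\Oc(P')$ between ideals containing every element below $x$, so the contribution is one full copy of the edge set of $\Oc(P')$ plus a proper subset of it. This is fixable and not the real obstacle.)

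The paper bypasses induction completely and writes down an explicit bijection $\Omega\to\Psi$ in one stroke. Given $(I,J)\in\Omega$ it sets
\[
A=\max(J),\qquad B=\min(J\setminus I)\ \cup\ \bigl(\max(I)\cap\max(J)\bigr),
\]
with the convention $\min(J\setminus I)=\emptyset$ when $|J\setminus I|=1$. One then checks directly that $A\setminus B=\max(J\setminus I)$ and $B\setminus A=\min(J\setminus I)$; since $J\setminus I$ is connected, so is $(A\setminus B)\cup(B\setminus A)$, and conversely one recovers $J$ from $A$ and then $J\setminus I$ (hence $I$) from $A\setminus B$ and $B\setminus A$. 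This is exactly the ``transport via the classical ideal/antichain correspondence'' that you list as a secondary alternative, with the missing idea supplied: the second antichain $B$ should record the \emph{minimal} elements of the connected piece $J\setminus I$ together with those maximal elements of $J$ that already lie in $I$. Had you pushed your inductive plan through, the matching of the new edges at $x$ would have forced you to discover essentially this same formula, so the direct route is both shorter and, in the end, unavoidable.
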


\begin{proof}
Let $\Omega$ denote the set of pairs $(I,J)$, where $I$ and $J$ are poset ideals
of $P$ with $I \neq J$ 
for which $I \subset J$ and $J \setminus I$ 
is connected in $P$.  Let $\Psi$ denote the set of pairs $(A,B)$, 
where $A$ and $B$ are antichains of $P$ with $A \neq B$ for which
$(A \setminus B) \cup (B \setminus A)$ is connected in $P$.

As is stated in the proof of \cite[Lemma 2.3]{TNcut},
if there exist $x, x' \in A$ and $y, y' \in B$ with $x < y$ and $y' < x'$,
then $(A \setminus B) \cup (B \setminus A)$ cannot be connected.  In fact, 
if $(A \setminus B) \cup (B \setminus A)$ is connected, then there exists
a sequence $x = x_0, y_0, x_1, y_1, \ldots, y_s, x_s = x'$ with
each $x_i \in A \setminus B$ and each $b_j \in B \setminus A$ 
such that 
$x_i$ and $y_i$ are comparable for each $i$ 
and that
$y_j$ and $x_{j+1}$ are comparable for each $j$.
Since $x < y$ and since $B$ is an antichain, it follows that $x = x_0 < y_0$.
Then, since $A$ is an antichain, one has $y_0 > x_1$.
Continuing these arguments says that $y_s > x_s = x'$.
However, since $y' < x'$, one has $y' < y_s$, which contradicts the fact 
that $B$ is an antichain.

As a result, each $(A, B) \in \Psi$ can be required to satisfy
either (i) $B \subset A$ or (ii) $b < a$ whenever $a \in A$ and $b \in B$ 
are comparable.
By virtue of Lemma \ref{Boston}, our work is to construct a bijection 
between $\Omega$ and $\Psi$. 

Given $(I,J) \in \Omega$, we associate with
\[
% A = \max(J \setminus I) \cup (\max(I) \cap \max(J)), \, \, \, \, \, 
A = \max(J), \, \, \, \, \,  
B = \min(J \setminus I) \cup (\max(I) \cap \max(J))
\]
with setting $\min(J \setminus I) = \emptyset$ if $|J \setminus I| = 1$,
where, say, $\max(I)$ (resp. $\min(I)$) stands for the set of maximal (resp. minimal)
elements of $I$. 
It then follows that
\begin{eqnarray}
\label{emptyset}
\min(J \setminus I) \cap (\max(I) \cap \max(J)) = \emptyset.
\end{eqnarray}
Now, $A = \max(J)$ is an antichain of $P$.
If $x \in \min(J \setminus I)$ and $y \in \max(I) \cap \max(J)$, then
$x \not\leq y$ since $x \not\in I$ and $y \in I$,
and $y \not\leq x$ since $x \in J$, $x \not= y$ and $y \in \max(J)$.
Hence $B$ is an antichain of $P$.  Furthermore, since
$\max(J) \cap \min(J \setminus I) = \emptyset$,
where
$\min(J \setminus I) = \emptyset$ if $|J \setminus I| = 1$,
it follows that $A \setminus B = \max(J) \setminus \max(I)
= \max(J \setminus I)$ 
and $B \setminus A = \min(J \setminus I)$.
Hence $(A \setminus B) \cup (B \setminus A)$ 
is connected in $P$.  Thus $(A,B) \in \Psi$.

We claim that the above map which associates $(I,J) \in \Omega$ with $(A,B) \in \Psi$
is, in fact, a bijection between $\Omega$ and $\Psi$.

Let $(I,J)$ and $(I',J')$ belong to $\Omega$ with
$\max(J) = \max(J')$ and 
\begin{eqnarray}
\label{Sydney}
\min(J \setminus I) \cup (\max(I) \cap \max(J))
= \min(J' \setminus I') \cup (\max(I') \cap \max(J')).
\end{eqnarray}
Then $J = J'$.  Let $\max(I) \cap \max(J) \neq \max(I') \cap \max(J)$ and,
say, $\max(I) \cap \max(J) \neq \emptyset$.
Let $x \in \max(I) \cap \max(J)$ and $x \not\in \max(I') \cap \max(J)$.
By using (\ref{Sydney}), one has $x \in \min(J \setminus I')$.  
Since $\max(J \setminus I') \cap \min(J \setminus I') = \emptyset$,
where $\min(J \setminus I) = \emptyset$ if $|J \setminus I| = 1$,
there is $y \in \max(J \setminus I')$
with $x < y$.  This is impossible since $x$ and $y$ belong to $\max(J)$.
As a result, 
one has $\max(I) \cap \max(J) = \max(I') \cap \max(J)$.
It then follows from $(\ref{emptyset})$ and $(\ref{Sydney})$ that
$\min(J \setminus I) = \min(J \setminus I')$.
In addition, 
\[
\max(J \setminus I) = \max(J) \setminus \max(I) = 
\max(J) \setminus (\max(I) \cap \max(J)) = \max(J \setminus I').
\]
Since
\[
J \setminus I = \{ \, x \in P \, : 
\, x \leq b, \, \exists \, b \in \max(J \setminus I) \, \} 
\bigcap \{ \, x \in P \, : \, a \leq x, \, \exists \, a \in \min(J \setminus I) \, \},
\]
it follows from
$\min(J \setminus I) = \min(J \setminus I')$
and 
$\max(J \setminus I) = \max(J \setminus I')$
that $J \setminus I = J \setminus I'$.
Hence $I = I'$ and $(I,J) = (I',J')$, as desired.

Let $(A,B)$ belong to $\Psi$.  Let $J$ be the poset ideal of $P$ 
with $\max(J) = A$.
Let $I$ be the poset ideal of $P$ consisting of those $x \in J$ for which
$x \geq y$ for no $y \in B \setminus A$.
In particular, $I = J \setminus \{ x \}$ if $B \subset A$ with
$A \setminus B = \{ x \}$.
Then $\max(J \setminus I) = A \setminus B$ and
$\min(J \setminus I) = B \setminus A$, where
$\min(J \setminus I) = \emptyset$ if $|J \setminus I| = 1$.
Hence $I \subset J$ and $J \setminus I$ is connected in $P$.
Furthermore, 
$B = \min(J \setminus I) \cup (\max(I) \cap \max(J))$, as required.
\, \, \, \, \, \, \, \, \, \, \, \, \, \, \, \, \, \, \, \, \, \, \, \, \,
\, \, \, 
\end{proof}

\section{Degree sequences of $1$-skeletons of order and chain polytopes}
Let $\ZZ^{d \times d}$ denote the set of $d \times d$ integral matrices.
A matrix $A \in \ZZ^{d \times d}$ is {\em unimodular} if $\det(A) = \pm1$. 
Given integral polytopes $\Pc \subset \RR^d$ of dimension $d$ 
and $\Qc \subset \RR^d$ of dimension $d$, 
we say that $\Pc$ and $\Qc$ are {\em unimodularly equivalent} 
if there exists a unimodular matrix $U \in \ZZ^{d \times d}$ 
and an integral vector ${\bf w} \in \ZZ^d$ such that
$Q = f_U(P) + {\bf w}$, where $f_U $ is the linear transformation of $\RR^d$
defined by $U$, i.e., $f_U({\bf v})={\bf v}U$ for all  ${\bf v} \in \RR^d$.

Recall from \cite{TN} that $\Oc(P)$ and $\Cc(P)$ are unimodularly equivalent 
if and only if the poset $X$ of Figure $1$ does not appear as a subposet of $P$.
In consideration of Example \ref{EX}, we now prove the following

\begin{Theorem}
Let $P$ be a finite poset.  Then the degree sequence of the finite simple graph 
which is the $1$-skeleton of $\Oc(P)$ is equal to that of $\Cc(P)$ if and only if
$\Oc(P)$ and $\Cc(P)$ are unimodularly equivalent. 
\end{Theorem}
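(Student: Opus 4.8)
The plan is to prove the two implications separately; the forward implication is immediate and essentially all the content lies in the reverse one, which I would handle by contraposition using the classification in \cite{TN}. For the forward implication: if $\Oc(P)$ and $\Cc(P)$ are unimodularly equivalent, then the affine bijection ${\bf v}\mapsto f_U({\bf v})+{\bf w}$ taking $\Oc(P)$ onto $\Cc(P)$ carries faces to faces, so it restricts to an isomorphism between the finite simple graphs that are the $1$-skeletons of the two polytopes, and isomorphic graphs have the same degree sequence.

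So suppose $\Oc(P)$ and $\Cc(P)$ are not unimodularly equivalent. By \cite{TN} the poset $X$ of Figure~$1$ occurs as a subposet of $P$; fix elements $u,v,w,y,z\in P$ with $u$ and $v$ incomparable, $u<w$, $v<w$, $w<y$, $w<z$, and $y$ and $z$ incomparable, and write $d=|P|$. I claim that the smallest term of the degree sequence of the $1$-skeleton of $\Cc(P)$ is at most $d$, whereas every term of the degree sequence of the $1$-skeleton of $\Oc(P)$ is at least $d+1$; since these degree sequences are sorted in increasing order, this shows at once that they are different, which is the conclusion of the contrapositive. The first part is quick: by Lemma~\ref{Boston}(b) the neighbours of the vertex $\rho(\emptyset)$ of $\Cc(P)$ are exactly the $\rho(B)$ with $B$ a nonempty connected antichain of $P$, and a connected antichain must be a singleton, so $\deg_{\Cc(P)}(\rho(\emptyset))=d$.

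For the second part, let $I$ be any poset ideal of $P$. Using Lemma~\ref{Boston}(a), the neighbours of $\rho(I)$ in $\Oc(P)$ are the vertices $\rho(I\setminus U)$ with $U$ a nonempty connected order filter of the subposet $I$, together with the vertices $\rho(I\cup D)$ with $D$ a nonempty connected order ideal of the subposet $P\setminus I$ (one checks that $I\setminus U$ and $I\cup D$ are again poset ideals of $P$, using that $I$ is one). Hence, writing $c^{\uparrow}(Q)$ and $c^{\downarrow}(Q)$ for the numbers of nonempty connected order filters, respectively order ideals, of a finite poset $Q$, we get $\deg_{\Oc(P)}(\rho(I))=c^{\uparrow}(I)+c^{\downarrow}(P\setminus I)$. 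Now $c^{\uparrow}(Q)\ge |Q|$ always, the $|Q|$ principal filters $\{\,t:t\ge q\,\}$ ($q\in Q$) being distinct and connected, and $c^{\uparrow}(Q)\ge |Q|+1$ whenever $Q$ has two incomparable elements with a common upper bound, since the union of their principal filters is then a connected filter with two distinct minimal elements, hence not principal; dually for $c^{\downarrow}$. Finally I split on $w$: if $w\in I$ then $u,v\in I$ (because $I$ is an order ideal), and $u,v$ are incomparable with common upper bound $w$ inside $I$, so $c^{\uparrow}(I)\ge |I|+1$; if $w\notin I$ then $w,y,z\in P\setminus I$ (again because $I$ is an order ideal), and $y,z$ are incomparable with common lower bound $w$ inside $P\setminus I$, so $c^{\downarrow}(P\setminus I)\ge |P\setminus I|+1$. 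In either case $\deg_{\Oc(P)}(\rho(I))\ge |I|+|P\setminus I|+1=d+1$, which completes the contrapositive and hence the theorem.

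The point that needs the most care is the degree identity for $\Oc(P)$: checking that deleting a connected order filter from, or adjoining a connected order ideal to, a poset ideal of $P$ always yields a poset ideal of $P$, and that these operations biject the neighbours of $\rho(I)$ with the relevant connected filters and ideals. In fact only the inequalities $c^{\uparrow}(I)\ge |I|$ and $c^{\downarrow}(P\setminus I)\ge |P\setminus I|$ together with the single extra non-principal filter or order ideal supplied by the copy of $X$ are actually used, so one may bypass the full identity and instead just write down $d+1$ explicit neighbours of each $\rho(I)$.
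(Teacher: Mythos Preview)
Your proof is correct and follows essentially the same approach as the paper: both argue that $\rho(\emptyset)$ has degree exactly $d$ in the $1$-skeleton of $\Cc(P)$, while every vertex of $\Oc(P)$ has degree at least $d+1$, the latter shown by exhibiting $d$ ``principal'' neighbours of $\rho(I)$ plus one extra neighbour coming from the copy of $X$, according to whether the middle element lies in $I$ or not. Your presentation via the counts $c^{\uparrow}(I)$ and $c^{\downarrow}(P\setminus I)$ is slightly more structured than the paper's, which simply writes down the $d+1$ explicit neighbours directly (the shortcut you mention in your final paragraph), but the content is the same.
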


\begin{proof}
{\bf (``If'')}
\,If $\Oc(P)$ and $\Cc(P)$ are unimodularly equivalent, then 
the $1$-skeleton of $\Oc(P)$ is isomorphic to that of $\Cc(P)$ as finite graphs.
Thus in particular the degree sequence of the $1$-skeleton of $\Oc(P)$ is equal to 
that of $\Cc(P)$, as required. 

\medskip

{\bf (``Only If'')}
\,Let $|P| = d$.
Suppose that $\Oc(P)$ is not unimodularly equivalent to $\Cc(P)$.  
It then follows from \cite[Theorem 2.1]{TN} that
the poset $X$ of Figure $1$ does appear as a subposet of $P$.
Let $X = \{a,b,c,g,h\}$, where $a < c, b < c, c < g$ and $c < h$.
Work with the same notation as in the proof of Theorem \ref{main}.
Write $G_{\Oc(P)}$ for the finite simple graph which is the $1$-skeleton of $\Oc(P)$
and $G_{\Cc(P)}$ for that of $\Cc(P)$.

Let $A \neq \emptyset$ be an antichain of $P$.  Then $(\emptyset, A) \in \Psi$
if and only if $|A| = 1$.  It then follows that the degree of the vertex 
$\rho(\emptyset)$ of $G_{\Cc(P)}$ is equal to $d$. 

We now prove that the degree of each vertex of $G_{\Oc(P)}$ is at least $d+1$.
% For each $x \in P$ we write $I_{x}$ for the poset ideal of $P$ 
% consisting of those $y \in P$ with $y \leq x$
% and $I^{x}$ for the subset of $P$ consisting of those $z \in P$ 
% with $z \geq x$.
Let $I$ be a poset ideal of $P$.  
For each $x \in I$ we write $I'$ for the poset ideal of $P$ consisting of
those $y \in I$ with $y \not\geq x$.  Then $(I', I) \in \Omega$.
For each $x \in P \setminus I$ we write $I'$ for the poset ideal of $P$ 
consisting of those $y \in P$ with either $y \in I$ or $y \leq x$. 
Then $(I, I') \in \Omega$.
As a result, the degree of each vertex of $G_{\Oc(P)}$ is at least $d$.

Since the poset $X = \{a,b,c,g,h\}$ of Figure $1$ does appear as a subposet of $P$,
one has either $c \in I$ or $c \not\in I$.  Let $c \in I$ and $I'$ the poset ideal
of $P$ consisting of those $y \in I$ with neither $y \geq a$ nor $y \geq b$.
Then $(I', I) \in \Omega$.  Let $c \not\in I$ and $I'$ the poset ideal of $P$
consisting of those $y \in P$ with $y \in I$ or $y \leq g$ or $y \leq h$.
Then $(I, I') \in \Omega$.  Hence the degree of each vertex of $G_{\Oc(P)}$ 
is at least $d + 1$, as desired.
%\, \, \, \, \, \, \, \, \, \, \, \, \, \, \, 
%\, \, \, \, \, \, \, \, \, \, \, \, \, \, \, 
%\, \, \, \, \, \, \, \, \, 
\end{proof}

Together with \cite[Corollary 2.3]{TN} it follows that

\begin{Corollary}
\label{Corollary}
Given a finite poset $P$, the following conditions are equivalent:
\begin{enumerate}
\item[(i)] $\Oc(P)$ and $\Cc(P)$ are unimodularly equivalent;
\item[(ii)] $\Oc(P)$ and $\Cc(P)$ are affinely equivalent;
\item[(iii)] $\Oc(P)$ and $\Cc(P)$ have the same $f$-vector {\rm (\cite[p. 12]{HibiRedBook})};
\item[(iv)] The number of facets of $\Oc(P)$ is equal to that of $\Cc(P)$;
\item[(v)] the degree sequence of the finite simple graph 
which is the $1$-skeleton of $\Oc(P)$ is equal to that of $\Cc(P)$;
\item[(vi)] The poset $X$ of Figure $1$ of does not appear
as a subposet of $P$.
\end{enumerate}
\end{Corollary}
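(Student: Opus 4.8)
The plan is to prove the six conditions equivalent by combining a handful of elementary implications with the two results quoted from \cite{TN} and the theorem on degree sequences established just above.

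First I would record the elementary implications $(i)\Rightarrow(ii)\Rightarrow(iii)\Rightarrow(iv)$. Indeed $(i)\Rightarrow(ii)$ holds because a unimodular equivalence is in particular an affine equivalence; $(ii)\Rightarrow(iii)$ holds because an affine isomorphism of polytopes carries $k$-faces onto $k$-faces, hence induces an isomorphism of face lattices and in particular preserves the $f$-vector (recall $\dim\Oc(P)=\dim\Cc(P)=d$); and $(iii)\Rightarrow(iv)$ holds because the number of facets of a $d$-dimensional polytope is the last entry $f_{d-1}$ of its $f$-vector. Together with $(vi)\Rightarrow(i)$, which is \cite[Theorem 2.1]{TN} as recalled above, and $(iv)\Rightarrow(vi)$, which is the substantive content of \cite[Corollary 2.3]{TN}, this yields the equivalence of $(i)$, $(ii)$, $(iii)$, $(iv)$ and $(vi)$.

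It then remains to adjoin $(v)$. But the preceding theorem asserts exactly that the degree sequence of the $1$-skeleton of $\Oc(P)$ equals that of $\Cc(P)$ if and only if $\Oc(P)$ and $\Cc(P)$ are unimodularly equivalent, that is, $(v)\Leftrightarrow(i)$. Appending this link to the chain above makes all six conditions equivalent, which completes the proof.

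The one place where a genuine argument is needed — beyond the definitions and the degree-sequence theorem already in hand — is the implication $(iv)\Rightarrow(vi)$ (equivalently $(iv)\Rightarrow(i)$), namely the assertion that equality of the numbers of facets alone already forbids $X$ from occurring as a subposet of $P$; I would not reprove this but simply cite \cite[Corollary 2.3]{TN}. One could, if desired, reconstruct it by a facet count analogous to the degree count carried out for $(v)$, exhibiting for a poset that contains $X$ a facet of $\Cc(P)$ with no counterpart among the facets of $\Oc(P)$; but this is unnecessary given the available references.
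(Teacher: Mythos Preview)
Your proposal is correct and is essentially the paper's approach spelled out in detail: the paper states the corollary as an immediate consequence of \cite[Corollary 2.3]{TN} (which supplies the equivalence of (i)--(iv) and (vi)) together with the degree-sequence theorem just proved (which adjoins (v)). Your chain $(i)\Rightarrow(ii)\Rightarrow(iii)\Rightarrow(iv)\Rightarrow(vi)\Rightarrow(i)$ and the appended $(v)\Leftrightarrow(i)$ make explicit the elementary implications the paper leaves implicit, but the logical structure and the citations are the same.
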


% \newpage

{}

\end{document}